\documentclass[11pt]{article}
\usepackage{amsthm,amsmath,amsfonts,latexsym,amssymb,mathrsfs,ulem}
\usepackage{color}

\theoremstyle{definition}
\newtheorem{Def}{Definition}[section]

\newtheorem{thm}[Def]{Theorem}

\newtheorem{lem}[Def]{Lemma}

\newtheorem{rem}[Def]{Remark}

\newtheorem{prob}[Def]{Problem}

\renewcommand{\thefootnote}{\fnsymbol{footnote}}

\definecolor{darkgreen}{rgb}{.1,.5,0}

\begin{document}

\title{Subnormal block Toeplitz operators}

\author{Mankunikuzhiyil Abhinand, Ra{\'u}l\ E.\ Curto, In Sung Hwang, Woo Young Lee,\\
 and Thankarajan Prasad}

\date{}

\maketitle

%%%%%%%%%%%%%%%%%%%%%%%%%%%%%%%%%%%%%%%%%%%%%%%%%%%%%%%%%%%%%%%%%%%%%%%%%%%%%%%%%
%
%
%              Abstract
%
%
%%%%%%%%%%%%%%%%%%%%%%%%%%%%%%%%%%%%%%%%%%%%%%%%%%%%%%%%%%%%%%%%%%%%%%%%%%%%%%%%%

\noindent{\bf Abstract.} In this paper we consider the subnormality
of block Toeplitz operators $T_\Phi$, where $\Phi$ is an $n\times n$
matrix-valued function on the unit circle $\mathbb T$ of the form
$$
\Phi=Q\Phi^* \quad \hbox{($Q$ is a finite Blaschke--Potapov
product).}
$$
This is related to a matrix-valued version of Halmos's Problem 5 and
Nakazi-Takahashi Theorem. \ We ask whether $T_\Phi$ is either normal or analytic if $T_\Phi$ is subnormal,
where $\Phi$ is of the above form. \ We give answers to this problem
for different cases of the symbol. \ Moreover, we provide a sufficient
condition for the answer to be affirmative when $\Phi^*$ is not of
bounded type.

\medskip

%%%%%%%%%%%%%%%%%%%%%%%%%%%%%%%%%%%%%%%%%%%%%%%%%%%%%%%%%%%%%%%%%%%%%%%%%%%%%%%

\setcounter{page}{1}
%%\tableofcontents

%%%%%%%%%%%%%%%%%%%%%%%%%%%%%%%%%%%%%%%%%%%%%%%%%%%%%%%%%%%%%%%%%%%%%%%%%%%%%%%%%%%%%%%%%%%%%%

\renewcommand{\thefootnote}{}
\footnote{
\\
\vskip -.5cm \ \noindent \hskip -.4cm \textit{Mathematics Subject
Classification
(2020).} Primary 47B20, 47A10\\
\noindent
 \textit{Keywords.} Subnormal operators, Toeplitz operators,
 block Toeplitz operators, Blaschke-Potapov products, Nakazi-Takahashi Theorem}

\bigskip

%%\tableofcontents

%%\vskip 1cm

%%%%%%%%%%%%%%%%%%%%%%%%%%%%%%%%%%%%%%%%%%%%%%%%%%%%%%%%%%%%%%%%%%%%%%%%%%%%%%%%%%%%%%%%%%
%
%
%                    Section 1
%
%
%%%%%%%%%%%%%%%%%%%%%%%%%%%%%%%%%%%%%%%%%%%%%%%%%%%%%%%%%%%%%%%%%%%%%%%%%%%%%%%%%%%%%%%%%%%

\section{Introduction}

\medskip

Throughout the paper, let $\mathcal H$ be a complex separable
Hilbert space and let $\mathcal B(\mathcal {H})$ be the algebra of
all bounded linear operators on $\mathcal H$. \ We denote by
$\hbox{Ran}\,T$, $\hbox{ker}\,T$ and $\hbox{rank}\,T$ the range, the
kernel, and the rank of $T\in\mathcal{B(H)}$, respectively. \ For
operators $A,B\in\mathcal{B(H)}$, let $[A,B]:=AB-BA$. \ An operator
$T\in\mathcal{B(H)}$ is called {\it normal} if $[T^*,T]=0$, {\it
hyponormal} if $[T^*,T]\ge 0$, and {\it subnormal} if $T$ has a
normal extension $N$, i.e., $T$ is a restriction of $N$ to $\mathcal
H$, where $N$ is a normal operator on some Hilbert space $\mathcal
{K}\supseteq \mathcal {H}$ and $\mathcal H$ is invariant for $N$.
The notion of subnormal operator was introduced by P.R. Halmos
\cite{Ha3} and has been much-studied in the ensuing decades. \ Also, the
class of hyponormal operators includes the classes of normal and
subnormal operators \cite{Ha1}.

Let $\mathbb T$ denote the unit circle in the complex plane $\mathbb
C$. \ A function $\theta\in H^\infty(\mathbb T)$ is called an {\it
inner} function if it satisfies $|\theta|=1$ a.e. on $\mathbb T$.
For a function $\varphi\in L^\infty(\mathbb T)$, the Toeplitz
operator $T_\varphi$ with symbol $\varphi$ on the Hardy space
$H^2(\mathbb T)$ is defined by
$$
T_\varphi f:=P(\varphi f),
$$
where $f\in H^2(\mathbb T)$ and $P$ is the orthogonal projection of
$L^2(\mathbb T)$ onto $H^2(\mathbb T)$. \ A Toeplitz operator
$T_\varphi$ is called {\it analytic} if $\varphi\in H^\infty(\mathbb
T)$. \ A function $\varphi\in L^\infty(\mathbb T)$ is said to be of
{\it bounded type} if $\varphi=\frac{\phi_1}{\phi_2}$, where
$\phi_1$ and $\phi_2$ are in $H^\infty(\mathbb T)$. \ A. Brown and
P.R. Halmos \cite{BH} characterized the normality of Toeplitz
operators in terms of their symbols. \ Also the hyponormality of
Toeplitz operators was characterized by C. Cowen \cite{Cow}:

\medskip

\noindent{\bf Cowen's Theorem} (\cite{Cow}, \cite{NT}). \ For each $\varphi\in
L^\infty(\mathbb T)$, the Toeplitz operator $T_\varphi$ is
hyponormal if and only if $\mathcal{E}(\varphi)$ is nonempty, where
$$
\mathcal{E}(\varphi)=\{k\in H^\infty(\mathbb T): \|k\|_\infty\le 1\
\hbox{and}\ \varphi-k\overline\varphi\in H^\infty\}.
$$
By contrast, it seems to be very hard to determine the subnormality
of Toeplitz operators in terms of their symbols. \ Indeed, P.R. Halmos
\cite{Ha1} posed the following problem:
$$
\hbox{Is every subnormal Toeplitz operator either normal or
analytic\,?}
$$
In 1984, C. Cowen and J. Long \cite{CoL} answered this problem negatively. \ However, a complete characterization of subnormal Toeplitz operators
by the properties of their symbols is still unknown. \ This naturally
raised the following problem:
$$
\hbox{Which Toeplitz operators are subnormal\,?}
$$
To date, there are two interesting partial answers to the Halmos'
Problem as in the below.

\bigskip

\noindent {\bf Abrahamse's Theorem} (\cite[Theorem]{Ab}). \ Let
$\varphi\in L^\infty$ such that $\varphi$ or $\overline\varphi$ is
of bounded type. \ If $T_\varphi$ is hyponormal and $\ker
[T_\varphi^*, T_\varphi]$ is invariant under $T_\varphi$, then
$T_\varphi$ is normal or analytic. \ In particular, in this case, if
$T_\varphi$ is a subnormal operator then $T_\varphi$ is either
normal or analytic.

\bigskip

\noindent{\bf Nakazi-Takahashi Theorem} (\cite[Theorem 15]{NT}). \ If
$T_\varphi$ is subnormal and $\varphi = q \overline\varphi$, where
$q$ is a finite Blaschke product, then $T_\varphi$  is either normal
or
 analytic.

\bigskip

Now let us turn our attention to the block Toeplitz operators. \ The intensive and fruitful study of block Toeplitz
operators has contributed to the development of various fields of
mathematics and physics (cf. \cite{BEV}, \cite{BD}, \cite{BS},
\cite{CHL3}, \cite{CHL4}).

Let $I_n$ be the $n \times n$ identity matrix, $M_{n\times m}$
denote the set of all $n\times m$ complex matrices and $M_n\equiv
M_{n\times n}$. \ As denote by $L^2_{\mathbb C^n}$, $H^2_{\mathbb
C^n}$ and $L^\infty_{M_n}$ the set of all $\mathbb C^n$-valued
Lebesgue square integrable functions on $\mathbb T$, the associated
Hardy space and the set of all $M_n$-valued essentially bounded
functions on $\mathbb T$, respectively. \ For a function $\Phi\in
L^\infty_{M_n}$, the block Toeplitz operator $T_\Phi$ with symbol
$\Phi$ is defined by $T_\Phi f :=P_n(\Phi f)$ for $f\in H^2_{\mathbb
C^n}$, where $P_n$ is the orthogonal projection of $L^2_{\mathbb
C^n}$ onto $H^2_{\mathbb C^n}$. \ Similarly, a block Hankel operator
$H_\Phi$ with symbol $\Phi\in L^\infty_{M_n}$ is defined by $H_\Phi
f:=J_n P_n^\perp(\Phi f)$ for $f\in H^2_{\mathbb C^n}$, where
$P_n^{\perp}$ is the orthogonal projection of $L^2_{\mathbb C^n}$
onto $(H^2_{\mathbb C^n})^{\perp}\equiv L^2_{\mathbb C^n}\ominus
H^2_{\mathbb C^n}$ and $J_n$ is the unitary operator from
$L^2_{\mathbb C^n}$ onto $L^2_{\mathbb C^n}$ defined by
$J_n(f)(z):=\overline zI_n f(\overline z)$ for $f\in L^2_{\mathbb
C^n}$. \ If we take $H^2_{\mathbb C^n}=H^2(\mathbb T)\oplus
\cdots\oplus H^2(\mathbb T)$, then we can easily see that
$$
T_\Phi=\begin{bmatrix}T_{\phi_{11}}&\hdots&T_{\phi_{1n}}\\
&\vdots\\
T_{\phi_{n1}}&\hdots&T_{\phi_{nn}}
\end{bmatrix}\quad\hbox{and}\quad
H_\Phi=\begin{bmatrix}H_{\phi_{11}}&\hdots&H_{\phi_{1n}}\\
&\vdots\\
H_{\phi_{n1}}&\hdots&H_{\phi_{nn}}
\end{bmatrix},
$$
where
$$
\Phi=\begin{bmatrix}{\phi_{11}}&\hdots& {\phi_{1n}}\\
&\vdots\\
{\phi_{n1}}&\hdots&{\phi_{nn}}
\end{bmatrix}.
$$
For a function $\Phi\in L^\infty_{M_{n\times m}}$, write
$$
\breve{\Phi}(z):=\Phi(\overline{z}) \quad \hbox{and} \quad
\widetilde\Phi:=\breve{\Phi}^*.
$$
A function $\Phi\in L^\infty_{M_{n\times m}}$ is of bounded type if
each of its entries is of bounded type. \ A function $\Theta\in
H^\infty_{M_{n\times m}}$ is called an inner function if
$\Theta^*\Theta=I_m$ a.e. on $\mathbb T$. \ The following basic
properties for Toeplitz and Hankel operators are implicitly used in
the sequel:
\begin{equation}\label{basic}
 \aligned
&T_\Phi^*=T_{\Phi^*},\ \  H_\Phi^*= H_{\widetilde \Phi} \quad
(\Phi\in
L^\infty_{M_n});\\
&T_{\Phi\Psi}-T_\Phi T_\Psi = H_{\Phi^*}^*H_\Psi \quad
(\Phi,\Psi\in L^\infty_{M_n});\\
&H_\Phi T_\Psi = H_{\Phi\Psi},\ \
H_{\Psi\Phi}=T_{\widetilde{\Psi}}^*H_\Phi \quad (\Phi\in
L^\infty_{M_n}, \Psi\in H^\infty_{M_n});\\
&H_\Phi^* H_\Phi - H_{\Theta \Phi}^* H_{\Theta\Phi} =H_\Phi^*
H_{\Theta^*}H_{\Theta^*}^*H_\Phi \quad (\Theta\in H^\infty_{M_n}
 \ \hbox{inner,}  \ \Phi\in L^\infty_{M_n}).
\endaligned
\end{equation}
For a function $\Phi\in L^2_{M_n}$, write
$$
\Phi_+:=\mathbb{P}_n (\Phi)\in H^2_{M_n} \quad \hbox{and} \quad
\Phi_-:=[\mathbb{P}_n^{\perp} (\Phi)]^*\in zH^2_{M_n},
$$
where $\mathbb{P}_n$ and $\mathbb{P}_n^{\perp}$ denote the
orthogonal projections form $L^2_{M_n}$ onto $H^2_{M_n}$ and
$(H^2_{M_n})^{\perp}\equiv L^2_{M_n} \ominus H^2_{M_n}$,
respectively. \ Then we may write
$$
\Phi=\Phi_-^*+\Phi_+.
$$
For $\Phi\in L^\infty_{M_n}$, the following identities hold:
$$
P_nJ_n=J_nP_n^\perp, \quad P_n^\perp J_n=J_n P_n  \quad \hbox{and}
\quad J_n M_{\Phi}=M_{\breve{\Phi}}J_n.
$$
An $n\times n$ matrix-valued function $Q$ is called a {\it finite
Blaschke-Potapov product} if $Q$ is of the form
$$
Q(z)=v\prod_{m=1}^M (b_m(z)P_m+(I-P_m)),
$$
where $v$ is an $n\times n$ unitary constant matrix, $b_m$ is a {\it
Blaschke factor} of the form
$$
b_m(z)=\frac{z-\alpha_m}{1-\overline\alpha_m z}\ \
(\alpha_m\in\mathbb D)
$$
and $P_m$ is an orthogonal projection in $\mathbb  C^n$.

In 2006, C. Gu, J. Hendricks and D. Rutherford \cite{GHR} characterized normality
and hyponormality of block Toeplitz operators.

\medskip

\noindent{\bf Hyponormality and normality of block Toeplitz
operators} (\cite{GHR}).

\medskip
\noindent (a) (Hyponormality) For each $\Phi\in L^\infty_{M_n}$, let
$$
\mathcal{E}(\Phi)=\bigl\{K\in H^\infty_{M_n}: \|K\|_\infty\le 1\
\hbox{and}\ \Phi-K\Phi^*\in H^\infty_{M_n}\bigr\}.
$$
Then $T_\Phi$ is hyponormal if and only if $\Phi$ is normal (i.e.,
$\Phi^*\Phi=\Phi\Phi^*$ a.e. on $\mathbb T$) and $\mathcal{E}(\Phi)$
is nonempty.

\medskip
\noindent (b) (Normality) Let $\Phi=\Phi_-^* +\Phi_+$ be normal. \ If
$\hbox{det}\,\Phi_+$ is not identically zero then $T_\Phi$ is normal
if and only if $\Phi$ is normal and $\Phi_+-\Phi_+(0)=\Phi_-U$ for some constant unitary
matrix $U$.

\bigskip

For a function $\Phi\in H^2_{M_{n\times r}}$, we say that $\Delta\in
H^2_{M_{n\times m}}$ is a {\it left inner divisor} of $\Phi$ if
$\Phi=\Delta A$ for some $A\in H^2_{M_{m\times r}}$ ($m\le n$) and
$\Delta$ is an inner function. \ Two functions $\Phi\in
H^2_{M_{n\times r}}$ and $\Psi\in H^2_{M_{n\times m}}$ are said to
be {\it left coprime} if the only common left inner divisor of both
$\Phi$ and $\Psi$ is a constant unitary matrix, and {\it right
coprime} if $\widetilde\Phi$ and $\widetilde\Psi$ are left coprime.
Two functions $\Phi$ and $\Psi$ in $H^2_{M_n}$ are said to be {\it
coprime} if they are both left and right coprime. \ If $\Phi\in
H^2_{M_n}$ is such that $\hbox{det}\,\Phi\ne 0$,
 then any left inner divisor $\Delta$ of $\Phi$ is
square, i.e., $\Delta\in H^2_{M_n}$\label{square2}. \ If $\Phi\in
H^2_{M_n}$ is such that $\hbox{det}\,\Phi\ne 0$, then we say that
$\Delta\in H^2_{M_{n}}$ is a {\it right inner divisor} of $\Phi$ if
$\widetilde{\Delta}$ is a left inner divisor of $\widetilde{\Phi}$
(\cite{CHL4}).

\bigskip

In \cite{CHL2}, Curto-Hwang-Lee extended Abrahamse's Theorem to the
case of block Toeplitz operators.

\begin{lem} ({\bf Abrahamse's Theorem for Matrix-Valued
Symbols}) \cite[Theorem 3.5]{CHL2} \label{lem1000} \ \ Suppose
$\Phi=\Phi_-^*+\Phi_+\in L^\infty_{M_n}$ is such that $\Phi$ and
$\Phi^*$ are of bounded type of the form
$\Phi_+=A^*\Theta_0\Theta_2$ and $\Phi_-=B^*\Theta_2$, where
$\Theta_i=\theta_i I_n$ with an inner function $\theta_i$ ($i=0,2$)
and $A,B\in H^2_{M_n}$. \ Assume that $A$ and $\Theta_2$ are left
coprime and $B$ and $\Theta_2$ are left coprime. \ If
\begin{itemize}
\item[(i)] $T_\Phi$ is hyponormal and
\item[(ii)] $\ker[T_\Phi^*, T_\Phi]$ is invariant under
$T_\Phi$,
\end{itemize}
then $T_\Phi$ is either normal or analytic. \ In particular, if
$T_\Phi$ is subnormal then it is either normal or analytic.
\end{lem}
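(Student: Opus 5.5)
The plan is to follow the scalar Abrahamse argument, with the scalar inner function replaced by $\Theta_2=\theta_2 I_n$ and coprimeness used in place of the scalar factorization. First, from the bounded-type representation I would read off the Hankel kernels. Write $\Phi_+=A^*\Theta_0\Theta_2$ and $\Phi_-=B^*\Theta_2$. Left coprimeness of $B$ and $\Theta_2$ then gives
$$\ker H_{\Phi_-^*}=\Theta_2 H^2_{\mathbb C^n},$$
and similarly $\ker H_{\Phi_+^*}=\Theta_0\Theta_2H^2_{\mathbb C^n}$. The Beurling--Lax--Halmos description of the kernel identifies the inner part exactly.

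Next I would use hyponormality, via the Gu--Hendricks--Rutherford criterion quoted above. It gives $K\in\mathcal E(\Phi)$, so that $\Phi_+-K\Phi_-$ lies in $H^\infty_{M_n}$ (taking adjoint conventions carefully). Using the identities (\ref{basic}), this yields
$$[T_\Phi^*,T_\Phi]=H_{\Phi_+^*}^*H_{\Phi_+^*}-H_{\Phi_-^*}^*H_{\Phi_-^*}=H_{\Phi_-^*}^*\bigl(I-T_{\widetilde K}T_{\widetilde K}^*\bigr)H_{\Phi_-^*}.$$
This shows $\Theta_2H^2_{\mathbb C^n}\subseteq\ker[T_\Phi^*,T_\Phi]$. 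The key claim is then that $\ker[T_\Phi^*,T_\Phi]$ is exactly of the form $\Theta' H^2_{\mathbb C^n}$ for some inner $\Theta'$ dividing $\Theta_2$. Invariance under $T_\Phi$ together with the structure of the kernel makes it invariant under the shift, so Beurling--Lax--Halmos applies.

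Then I would exploit invariance under $T_\Phi$ itself, not only the shift. Take $f=\Theta_2 g$ in the kernel. Then $T_\Phi f=P_n(\Phi_-^*\Theta_2 g)+\Phi_+\Theta_2 g$. Since $\Phi_-^*\Theta_2=B$ is analytic, this equals $Bg+\Phi_+\Theta_2 g$. This element must again lie in $\ker[T_\Phi^*,T_\Phi]$. Iterating, or comparing with the kernel of $H_{\Phi_-^*}$, forces $Bg\in\Theta' H^2_{\mathbb C^n}$ for all $g$, which means $\Theta'$ is a left inner divisor of $B$. By left coprimeness of $B$ and $\Theta_2$, we get $\Theta'$ constant. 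So either the kernel is all of $H^2_{\mathbb C^n}$, in which case $T_\Phi$ is normal, or we must show that $\Theta_2$ is constant. If $\Theta_2$ is constant, then $\Phi_-=0$ and $T_\Phi$ is analytic.

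The main obstacle is this middle step: pinning down $\ker[T_\Phi^*,T_\Phi]$ precisely as $\ker H_{\Phi_-^*}$ (or showing it equals $H^2$) when $I-T_{\widetilde K}T_{\widetilde K}^*$ may have a nontrivial kernel. I would first try to handle it as Abrahamse does. If $\|K\|_\infty<1$ on a set of positive measure, the kernel equals $\ker H_{\Phi_-^*}$. Otherwise I would choose $K$ inner-like and compare $\Phi_+\Theta_2$ and $K B$ using coprimeness of $A$ with $\Theta_2$. This should produce the relation $\Phi_+-\Phi_+(0)=\Phi_-U$, which by part (b) of the normality characterization gives normality. The final sentence of the lemma follows because a subnormal operator is hyponormal, and the kernel of its self-commutator is invariant under it.
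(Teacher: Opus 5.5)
\textbf{There is a genuine gap: the self-commutator formula is wrong, and the rest of the plan inherits the error.}

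The paper only cites this lemma from [CHL2]. It does, however, record the first step of that proof, (\ref{3.16-10}), and your plan breaks down before reaching it.

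You have $\Phi_+$ and $\Phi_-$ interchanged. Membership $K\in\mathcal E(\Phi)$ does not say $\Phi_+-K\Phi_-\in H^\infty_{M_n}$; that difference lies in $H^2_{M_n}$ automatically. It says that $\Phi_-^*-K\Phi_+^*$ has no co-analytic part, i.e.\ $H_{\Phi_-^*}=H_{K\Phi_+^*}=T_{\widetilde K}^*H_{\Phi_+^*}$. Hence $[T_\Phi^*,T_\Phi]=H_{\Phi_+^*}^*(I-T_{\widetilde K}T_{\widetilde K}^*)H_{\Phi_+^*}$, which is Lemma \ref{lem3.1} since $H_{\Phi^*}=H_{\Phi_+^*}$.

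Your consequence $\Theta_2H^2_{\mathbb C^n}\subseteq\ker[T_\Phi^*,T_\Phi]$ is false. $\Phi=zI_n$ meets every hypothesis (take $B=0$, $\Theta_2=I_n$, $\Theta_0=zI_n$, $A=I_n$), and your containment would make $T_{zI_n}$ normal. What holds a priori is only $\Theta_0\Theta_2H^2_{\mathbb C^n}\subseteq\ker H_{\Phi_+^*}\subseteq\ker[T_\Phi^*,T_\Phi]$. Equality in the first inclusion would need $A$ and $\Theta_0\Theta_2$ coprime, which is not assumed.

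The later steps fail as well:
\begin{itemize}
\item You give no argument that the kernel is shift-invariant of the form $\Theta'H^2_{\mathbb C^n}$.
\item Your own conclusion ($\Theta'$ constant, so the kernel is everything) would give normality in every case, so the normal/analytic dichotomy never emerges.
\item The last paragraph (``should produce $\Phi_+-\Phi_+(0)=\Phi_-U$'') is not an argument. Appealing to part (b) of the Gu--Hendricks--Rutherford criterion would also require $\det\Phi_+\not\equiv 0$, which you do not have.
\end{itemize}

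\textbf{How to repair it.} Pushing elements through $T_\Phi$ is the right idea, but you must start from $\Theta_0\Theta_2h$. For polynomials $h$, $T_\Phi(\Theta_0\Theta_2h)=\Theta_0Bh+\Theta_0\Theta_2\Phi_+h$, so (ii) puts $\Theta_0Bh$ in the kernel. Combining this with $\Theta_0\Theta_2H^2_{\mathbb C^n}$, the Beurling--Lax Theorem and left coprimeness of $B$ and $\Theta_2$ give $\Theta_0H^2_{\mathbb C^n}\subseteq\ker[T_\Phi^*,T_\Phi]$. This is exactly (\ref{3.16-10}), which the paper attributes to STEP 1 of the cited proof, obtained without the coprimeness of $A$ and $\Theta_2$.

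That second hypothesis, which your plan never really uses, finishes the proof:
\begin{itemize}
\item Since $\Phi_+^*\Theta_0=\Theta_2^*A$, you get $(I-T_{\widetilde K}T_{\widetilde K}^*)H_{\Theta_2^*A}=0$. Left coprimeness of $A$ and $\Theta_2$ gives $\overline{\mathrm{Ran}}\,H_{\Theta_2^*A}=\mathcal H(\widetilde\Theta_2)$, where $\widetilde\Theta_2=\widetilde\theta_2 I_n$.
\item If $\theta_2$ is constant, then $\Phi_-=0$ and $T_\Phi$ is analytic.
\item Otherwise, write $I-T_{\widetilde K}T_{\widetilde K}^*=T_{I-\widetilde K\widetilde K^*}+H_{\widetilde K^*}^*H_{\widetilde K^*}$, with both terms $\ge 0$. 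The space $\mathcal H(\widetilde\Theta_2)$ contains $ue$ for every $e\in\mathbb C^n$, where $u=1-\overline{\widetilde\theta_2(0)}\,\widetilde\theta_2$ is outer. So the shift-invariant space $\ker H_{\widetilde K^*}$ is all of $H^2_{\mathbb C^n}$, and $K$ is constant.
\item The Toeplitz term then forces $K$ to be a constant unitary. Hence $I-T_{\widetilde K}T_{\widetilde K}^*=0$ and $T_\Phi$ is normal.
\end{itemize}
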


\medskip

Also, in the context of the Nakazi-Takahashi Theorem, Curto-Hwang-Lee \cite{CHL1} posed the following matrix-valued version:

\begin{prob} \cite[Problem 6.2]{CHL1}\label{prob1001}
If $\Phi\in L^\infty_{M_n}$ is such that $T_\Phi$ is a subnormal
operator and
$$
\Phi=Q\Phi^*\quad \hbox{($Q$ is a finite Blaschke-Potapov product)},
$$
does it follow that $T_\Phi$ is either normal or analytic\,?
\end{prob}

\medskip

The aim of this paper is to give an answer to Problem
\ref{prob1001}.

\bigskip

%%%%%%%%%%%%%%%%%%%%%%%%%%%%%%%%%%%%%%%%%%%%%%%%%%%%%%%%%%%%%%%%%%%%%%%%%%%%%%%%
%
%
%                Section 2
%
%
%%%%%%%%%%%%%%%%%%%%%%%%%%%%%%%%%%%%%%%%%%%%%%%%%%%%%%%%%%%%%%%%%%%%%%%%%%%%%%%

\section{The statement of main results}

\medskip

We now analyze Problem \ref{prob1001} by splitting it into cases
following the features of the symbols.

\medskip

Let $\Phi\in L^\infty_{M_n}$ be such that
\begin{equation}\label{setting}
\Phi=Q\Phi^*\quad\hbox{for a finite Blaschke-Potapov product $Q$.}
\end{equation}
There are three cases to consider.

\medskip

\begin{itemize}
\item[] {\bf Case 1}: $\Phi^*$ is of bounded type of the form
$$
\Phi_-=B^*\Theta\quad \hbox{(where $B\in H^2_{M_n}$, $\Theta=\theta
I_n$ for $\theta$ an inner function),}
$$
where $B$ and $\Theta$ are left coprime.

\item[] {\bf Case 2}: $\Phi^*$ is of bounded type of the form
$$
\Phi_-=B^*\Theta\quad \hbox{(where $B\in H^2_{M_n}$, $\Theta=\theta
I_n$ for $\theta$ an inner function),}
$$
where $B$ and $\Theta$ are {\it not} left coprime.

\item[] {\bf Case 3}: $\Phi^*$ is not of bounded type.
\end{itemize}

\bigskip

Under the assumption (\ref{setting}), if  $\Phi^*$ is of bounded type
then it is easy to show that $\Phi$ is also of bounded type. \ For
Case 1, we give an affirmative answer to Problem \ref{prob1001}. \ To
do so, we need a preliminary observation. \ Suppose $\Phi:=\Phi_-^*+
\Phi_+\in L^\infty_{M_n}$ is such that $\Phi$ and $\Phi^*$ are of
bounded type. \ It is known (cf. \cite[Lemma 3.1]{CHL2})
that if $T_\Phi$ is hyponormal then we may write
\begin{equation}\label{btf}
\Phi_+=A^*\Theta_0\Theta_2\quad\hbox{and}\quad \Phi_- = B^*\Theta_2,
\end{equation}
where $\Theta_i =\theta_i I_n$ with an inner function $\theta_i$
($i=0,2$) and $A, B\in H^{2}_{M_n}$.

\medskip

In fact we can prove more:

\medskip

\begin{thm}\label{thm2001}
Suppose $\Phi:=\Phi_-^*+ \Phi_+\in L^\infty_{M_n}$ is such that
$\Phi$ and $\Phi^*$ are of bounded type. \ In view of (\ref{btf}), we
may write
$$
\Phi_- = B^*\Theta_2,
$$
where $\Theta_2 =\theta_2 I_n$ with an inner function $\theta_2$ and
$B\in H^{2}_{M_n}$. \ Assume that $B$ and $\Theta_2$ are left coprime.
If
\medskip

{\rm (i)} $T_\Phi$ is hyponormal; and

{\rm (ii)} $\text{\rm ker}\,[T_{\Phi}^*, T_{\Phi}]$ is invariant
under $T_{\Phi}$
\medskip

\noindent then $T_{\Phi}$ is normal or analytic. \ Hence, in
particular, if $T_\Phi$ is subnormal then it is either normal or
analytic.
\end{thm}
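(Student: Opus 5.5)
The plan is to reduce Theorem \ref{thm2001} to Lemma \ref{lem1000}, the matrix-valued Abrahamse theorem of \cite{CHL2}. The only hypothesis of Lemma \ref{lem1000} that is missing here is left coprimeness of $A$ and $\Theta_2$ in the representation $\Phi_+=A^*\Theta_0\Theta_2$ of (\ref{btf}). So the main step is to show that, under hyponormality and the left coprimeness of $B$ and $\Theta_2$, the analytic part can be rewritten as $\Phi_+=A'^*\Theta_0'\Theta_2$ with $A'$ and $\Theta_2$ left coprime. The $\Theta_2$ here is the same one as in $\Phi_-=B^*\Theta_2$.

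\textbf{Step 1: restrict the Hankel kernels.} Write $\Phi_+ = A^*\Theta_0\Theta_2$. By the basic identities (\ref{basic}), hyponormality of $T_\Phi$ gives
$$
H_{\Phi_+^*}^*H_{\Phi_+^*}\ge H_{\Phi_-^*}^*H_{\Phi_-^*}.
$$
Hence $\ker H_{\Phi_+^*}\subseteq \ker H_{\Phi_-^*}$. By the matrix Beurling--Lax theorem, $\ker H_{\Phi_-^*}=\Theta_2 H^2_{\mathbb C^n}$, because $B$ and $\Theta_2$ are left coprime. Likewise $\ker H_{\Phi_+^*}=\Delta H^2_{\mathbb C^n}$, where $\Delta$ is the inner function in a left coprime factorization $\Phi_+=A_1^*\Delta$. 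The inclusion then shows that $\Theta_2$ is a left inner divisor of $\Delta$, so $\Delta=\Theta_2\Omega$ for some inner $\Omega$.

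\textbf{Step 2: convert to the form required by Lemma \ref{lem1000}.} From Step 1, $\Phi_+ = A_1^*\Theta_2\Omega$ with $A_1$ and $\Theta_2\Omega$ left coprime. Since $\Theta_2=\theta_2 I_n$ is scalar, it commutes with everything, so $\Phi_+=(A_1^*\Omega)\Theta_2$. The difficulty is that $\Omega$ need not be scalar. I will use the fact that $\Phi_+$ is of bounded type to choose a scalar inner $\theta_0$ with $\Omega$ a left divisor of $\theta_0 I_n$; for instance $\theta_0$ can be a determinant-type inner function. Then $\theta_0\Omega^* = \Omega'$ is inner, and
$$
\Phi_+=(\Omega'^*A_1)^*\,\Theta_0\Theta_2 .
$$
Set $A=\Omega'^*A_1$ after verifying that it lies in $H^2_{M_n}$.

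\textbf{Step 3: show $A$ and $\Theta_2$ are left coprime.} This is the hardest step. Suppose a nonconstant inner $\Delta'$ is a common left inner divisor of $A$ and $\Theta_2$. Then $\Phi_+$ could be written with a smaller inner part than $\Theta_2\Omega$, namely with the piece $\Theta_2\Delta'^{-1}$-type factor removed. This contradicts the minimality of $\Delta$ coming from the left coprimeness of $A_1$ and $\Theta_2\Omega$. To handle the noncommutativity of the matrix inner factors here, I would first try a Hankel-kernel argument. If the factorization were not coprime, $\ker H_{\Phi_+^*}$ would be strictly larger than $\Delta H^2_{\mathbb C^n}$, which is a contradiction. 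If this proves too delicate, a fallback is to replace $\theta_0$ by a larger scalar inner function, which absorbs only factors coprime to $\theta_2$.

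\textbf{Step 4: conclude.} With both coprimeness conditions in hand, Lemma \ref{lem1000} applies to give that $T_\Phi$ is normal or analytic. For the final assertion, a subnormal operator is hyponormal. Its self-commutator kernel $\ker[T^*,T]$ is invariant under $T$, by the standard fact that for a subnormal $T$ this kernel equals $\{x:\|T^*x\|=\|Tx\|\}$ and is $T$-invariant. Hence the subnormal case follows from the first part.
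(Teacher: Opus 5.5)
Your Step 3 contains a genuine gap. You try to derive left coprimeness of $A$ and $\Theta_2$ from hyponormality plus coprimeness of $B$ and $\Theta_2$ alone, saving (ii) for Step 4, and that implication is false. Take $n=2$ and
\[
\Phi=\begin{bmatrix}\overline z+z^2&0\\ 0&\overline z+z\end{bmatrix},\qquad \Phi_+=\begin{bmatrix}z^2&0\\ 0&z\end{bmatrix},\qquad \Phi_-=zI_2 .
\]
\begin{itemize}
\item Here $\Phi_-=B^*\Theta_2$ with $B=I_2$ and $\Theta_2=zI_2$, which are left coprime.
\item $T_\Phi$ is hyponormal, since $\Phi$ is normal and $K=\mathrm{diag}(z,1)\in\mathcal E(\Phi)$ (indeed $\Phi=K\Phi^*$).
\item Your Step 1 gives $\ker H_{\Phi_+^*}=\mathrm{diag}(z^2,z)H^2_{\mathbb C^2}$, so $\Omega=\mathrm{diag}(z,1)$.
\item Any representation $\Phi_+=A^*\Theta_0\Theta_2$ forces $A=\theta_0 z\Phi_+^*=\theta_0\,\mathrm{diag}(\overline z,1)\in H^2_{M_2}$. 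Hence $\theta_0=z\theta'$ and $A=\theta'\,\mathrm{diag}(1,z)$.
\item This $A$ always shares the nonconstant inner left divisor $\mathrm{diag}(1,z)$ with $zI_2=\mathrm{diag}(1,z)\,\mathrm{diag}(z,1)$.
\end{itemize}
So enlarging $\theta_0$ (your fallback) cannot help. Your minimality argument also produces no contradiction. The common factor sits in the cofactor $\Omega'=\theta_0\Omega^*$ that you introduced to make the inner part scalar, not in $\Theta_2\Omega$. Splitting it off therefore does not shrink $\ker H_{\Phi_+^*}$. This $T_\Phi$ of course violates (ii): its self-commutator is the projection onto $\mathbb C z\oplus 0$, and $T_\Phi$ sends $(z^2,0)$ from the kernel to $(z+z^4,0)$ outside it. Note that here $\Theta_0=\Theta_2=zI_2$ are not coprime.

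The missing ingredient is hypothesis (ii), and it has to be used before Lemma \ref{lem1000} is invoked. The paper proceeds as follows.
\begin{itemize}
\item From STEP 1 of the proof of the matrix Abrahamse theorem, which uses (ii) but not coprimeness of $A$ and $\Theta_2$, it extracts the inclusion (\ref{3.16-10}): $\Theta_0H^2_{\mathbb C^n}\subseteq\ker[T_\Phi^*,T_\Phi]$.
\item Suppose $\Theta_0=\Omega\Theta_0'$ and $\Theta_2=\Omega\Theta_2'$ with $\Omega$ scalar. Then $H_{\Phi_+^*}$ must vanish on $\Theta_2\Theta_0'H^2_{\mathbb C^n}$, which gives $A\Omega^*\in H^2_{M_n}$. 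This contradicts the minimality in (\ref{btf}) unless $\Theta_0$ and $\Theta_2$ are coprime — exactly what fails in the example above.
\item With that coprimeness, let $\Delta'$ be a common right inner divisor of $A$ and $\Theta_2=\Delta\Delta'$. Then $\Phi_+=\Delta\Theta_0A_1^*$, so $\Theta_2$ left-divides $\Delta\Theta_0$ by the right coprime factorization $\Phi_+=\Theta_2\Delta_1A_r^*$.
\item Since $\Theta_0H^2_{\mathbb C^n}\cap\Theta_2H^2_{\mathbb C^n}=\Theta_0\Theta_2H^2_{\mathbb C^n}$, $\Theta_2$ left-divides $\Delta$, so $\Delta'$ is constant.
\end{itemize}

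Two smaller slips. First, $\ker H_{\Phi_+^*}=\Delta H^2_{\mathbb C^n}$ corresponds to a right coprime factorization $\Phi_+=\Delta A_1^*$, not to $A_1^*\Delta$. Second, with that correction the new $A$ is $A_1\Omega'$, which lies in $H^2_{M_n}$ automatically, so no verification is needed.
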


\bigskip

For Case 2, we give a negative answer to Problem \ref{prob1001}. \ To
see this, let
$$
\Phi:=\begin{bmatrix}z+\overline z&0\\ 0&z\end{bmatrix}.
$$
Then $T_\Phi$ is subnormal because $T_{z+\overline z}$ is
self-adjoint and $T_z$ is subnormal. \ But clearly, $T_\Phi$ is
neither analytic nor normal because $T_z$ is not normal. \ Observe
$$
\Phi=Q\Phi^*\quad\hbox{for a finite Blaschke-Potapov product
$Q:=\begin{bmatrix}1&0\\ 0&z^2\end{bmatrix}$}
$$
and
$$
\Phi_-=\begin{bmatrix}z&0\\ 0&0\end{bmatrix}=B^*\Theta\quad
\hbox{with}\ \ B:=\begin{bmatrix}1&0\\0&0\end{bmatrix}\ \
\hbox{and}\ \  \Theta:=\begin{bmatrix}z&0\\0&z\end{bmatrix}.
$$
Note that $B$ and $\Theta$ are not (left) coprime (see \cite[Lemma
3.3]{CHL2}).

\bigskip

For Case 3, we also give a negative answer to  Problem
\ref{prob1001}. \ To see this, let
$$
\Phi:=\begin{bmatrix}f+\overline f&0\\ 0&z\end{bmatrix},
$$
where $f$ is not of bounded type. \ Then $\Phi^*$ is not of bounded
type and by the same argument as in Case 2 we can see that $
\Phi=Q\Phi^*$ for some finite Blaschke-Potapov product and $T_\Phi$
is subnormal, but neither analytic nor normal because $T_z$ is
subnormal but not normal.

\bigskip

However, we give a sufficient condition for the answer to be
affirmative in Case 3.

\bigskip

For $\Phi=[\phi_{ij}]\in L^{\infty}_{M_n}$, write
$$
\overline{\Phi}:=[\overline{\phi}_{ij}].
$$
Then we have:

\begin{thm}\label{thm2002}
Let $\Phi=Q\Phi^*$ be a matrix-valued function in $L^\infty_{M_n}$
such that $\Phi^*$ is not of bounded type, where $Q$ is a finite
Blaschke-Potapov product. \ Assume that $T_{\Phi^*}$ and
$H_{\overline{\Phi}}$ are both injective. \ If $T_\Phi$ is subnormal
then it is normal.
\end{thm}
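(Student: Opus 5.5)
We follow the scalar Nakazi--Takahashi strategy: use subnormality to get a normal extension, use the relation $\Phi=Q\Phi^*$ to show the self-commutator vanishes on a large set, and then use the two injectivity hypotheses to conclude it vanishes everywhere.

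\textbf{Step 1: the operator identity.} Write $[T_\Phi^*,T_\Phi]=H_{\Phi^*}^*H_{\Phi^*}-H_\Phi^*H_\Phi$, which follows from (\ref{basic}). Substituting $\Phi=Q\Phi^*$ and applying the last identity in (\ref{basic}) with $\Theta=Q$ (a square inner function) gives
$$
[T_\Phi^*,T_\Phi]=H_{\Phi^*}^*H_{Q^*}H_{Q^*}^*H_{\Phi^*}.
$$
Since $Q$ is a finite Blaschke--Potapov product, $H_{Q^*}$ has finite rank. Hence the self-commutator has finite rank, and
$$
\ker[T_\Phi^*,T_\Phi]=\{f:\ H_{Q^*}^*H_{\Phi^*}f=0\}.
$$
Using $H_{Q^*}^*=H_{\widetilde{Q^*}}$ together with the intertwining relations in (\ref{basic}), this kernel can be identified with $\{f:\ P_n(Q^*\Phi^* f)\in\ldots\}$. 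More precisely, it is the set of $f$ with $\Phi^*f\in Q H^2_{\mathbb C^n}+(H^2_{\mathbb C^n})^\perp$, which one rewrites in terms of $T_{\Phi^*}$.

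\textbf{Step 2: use subnormality.} We use the standard fact that a subnormal operator with finite-rank self-commutator is a finite-rank perturbation of a normal operator. Better, we use the known lemma (Morrel, or Olin--Thomson) that if $T$ is subnormal then $\ker[T^*,T]$ is invariant under $T$. With this, $\mathcal K:=\ker[T_\Phi^*,T_\Phi]$ is $T_\Phi$-invariant, and the plan is to show $\mathcal K=H^2_{\mathbb C^n}$. Since $\mathcal K$ has finite codimension and is $T_\Phi$-invariant, $\mathcal K^\perp$ is a finite-dimensional $T_\Phi^*$-invariant subspace. So if $\mathcal K\neq H^2_{\mathbb C^n}$, then $T_\Phi^*=T_{\Phi^*}$ has an eigenvector $g\neq0$ in $\mathcal K^\perp$:
$$
T_{\Phi^*}g=\lambda g.
$$

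\textbf{Step 3: the contradiction (main obstacle).} We must turn the eigenvector $g$ into a bounded-type statement about $\Phi^*$, which is excluded by hypothesis. In the scalar case, an eigenvector of $T_{\overline\varphi}$ forces $\overline\varphi$ to be of bounded type, because $\ker(T_{\overline\varphi}-\lambda)$ is nearly invariant.

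Here we first note that $\lambda=0$ is excluded by the injectivity of $T_{\Phi^*}$. For $\lambda\neq 0$, we plan the following:

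1. Combine $T_{\Phi^*}g=\lambda g$ with $g\perp\mathcal K$. This places $g$ in $\operatorname{Ran}H_{\Phi^*}^*H_{Q^*}$, which is finite-dimensional.
2. Apply $J_n$ and conjugation. The identity $J_nM_\Phi=M_{\breve\Phi}J_n$ converts the relation $(\Phi^*-\lambda)g\in(H^2)^\perp$ into an identity involving $H_{\overline\Phi}$.
3. Show that a suitable vector built from $g$ lies in $\ker H_{\overline{\Phi}}$, unless $(\Phi^*-\lambda)$ admits a factorization $G^*F$ with $F,G\in H^2$. Such a factorization would force $\Phi^*$ to be of bounded type entrywise, after using Cramer's rule on the finite set of vectors obtained by applying powers of $T_{\Phi^*}$.
4. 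Since $H_{\overline\Phi}$ is injective, the first alternative is impossible, so the second holds. This contradicts the assumption that $\Phi^*$ is not of bounded type.

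Therefore $\mathcal K=H^2_{\mathbb C^n}$, so $[T_\Phi^*,T_\Phi]=0$ and $T_\Phi$ is normal.

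The delicate part is Step 3: showing that the finite-dimensionality forces a bounded-type representation of $\Phi^*$, and pinning down exactly where $H_{\overline\Phi}$ enters. We would first try the vector $\overline{g}$ (entrywise conjugate), mimicking the scalar proof of Nakazi--Takahashi.
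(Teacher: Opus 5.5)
Steps 1 and 2 are sound. But Step 3 carries the entire proof, and it is only a plan aimed at the wrong target.

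The scalar fact you invoke is false. For any $\varphi\in H^\infty$ one has $T_{\overline\varphi}k_a=\overline{\varphi(a)}\,k_a$ for the Szeg\H{o} kernels $k_a(z)=(1-\overline a z)^{-1}$, $a\in\mathbb D$, yet $\overline\varphi$ is in general not of bounded type. So an eigenvector of $T_{\Phi^*}$ by itself carries no bounded-type information, and the ``factorization $G^*F$ / Cramer's rule'' alternative has nothing to stand on. In fact the hypothesis that $\Phi^*$ is not of bounded type is redundant. If every entry of $\overline\Phi$ were of bounded type, a nonzero common denominator $b\in H^\infty$ would give $H_{\overline\Phi}(b e_1)=0$, with $e_1$ the first standard basis vector. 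The contradiction therefore comes from subnormality, the two injectivity assumptions, and the structure of $\mathcal K^\perp$, which you never identify.

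Your own item~1 is one line from that identification. Set $A:=H_{\Phi^*}^*H_{Q^*}=T_{\Phi Q^*}-T_\Phi T_{Q^*}=T_{\Phi^*}-T_\Phi T_{Q^*}$. Then $A$ vanishes on $QH^2_{\mathbb C^n}$ and equals $T_{\Phi^*}$ on $\mathcal H(Q)$. Since $A$ has finite rank, $\mathcal K^\perp=\hbox{Ran}\,AA^*=\hbox{Ran}\,A=T_{\Phi^*}\mathcal H(Q)$, which is (\ref{3.1}). Correspondingly, the kernel in Step 1 is $\{f:\ \Phi f\perp\mathcal H(Q)\}$, with $\Phi$ rather than $\Phi^*$.

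With (\ref{3.1}) your eigenvector route closes quickly.
\begin{itemize}
\item Write $g=T_{\Phi^*}h$ with $h\in\mathcal H(Q)$. Then $T_{\Phi^*}(g-\lambda h)=0$, so injectivity of $T_{\Phi^*}$ gives $g=\lambda h$. Hence $\lambda\neq0$ and $g\in\mathcal H(Q)$.
\item Since $g\in\mathcal H(Q)$, $T_{Q^*}T_{\Phi^*}g=\lambda T_{Q^*}g=0$. Using $Q^*\Phi=\Phi Q^*=\Phi^*$, we have $T_{Q^*}T_{\Phi^*}=T_{Q^*\Phi^*}=T_{\Phi Q^{*2}}$, so this says $\Phi Q^{*2}g\in(H^2_{\mathbb C^n})^\perp$.
\item Conjugating entrywise gives $\overline\Phi\,(Q^2)^t\overline g\in H^2_{\mathbb C^n}$. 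Also $(Q^2)^t\overline g=Q^t\,\overline{Q^*g}\in H^2_{\mathbb C^n}$, because $Q^*g\in(H^2_{\mathbb C^n})^\perp$.
\item So the vector to feed into $H_{\overline\Phi}$ is $(Q^2)^t\overline g$, not $\overline g$. Injectivity of $H_{\overline\Phi}$ forces $g=0$, a contradiction.
\end{itemize}

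This is the paper's claim (\ref{3.3}), namely $\hbox{Ran}\,[T_\Phi^*,T_\Phi]\cap\mathcal H(Q)=\{0\}$. The paper then concludes by rank--nullity for $T_{\Phi^*}$ on $\hbox{Ran}\,[T_\Phi^*,T_\Phi]+\mathcal H(Q)$. Your eigenvector argument, repaired as above, reaches the same contradiction slightly more directly. Without (\ref{3.1}) and (\ref{3.3}), however, Step 3 is not a proof.
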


\medskip

In Section 3, we give a proof of Theorem \ref{thm2001} and Section 4
is devoted to a proof of Theorem \ref{thm2002}.

\bigskip

%%%%%%%%%%%%%%%%%%%%%%%%%%%%%%%%%%%%%%%%%%%%%%%%%%%%%%%%%%%%%%%%%%%%%%%%%%%%%%%%
%
%
%                Section 3
%
%
%%%%%%%%%%%%%%%%%%%%%%%%%%%%%%%%%%%%%%%%%%%%%%%%%%%%%%%%%%%%%%%%%%%%%%%%%%%%%%%

\section{The proof of Theorem \ref{thm2001}}

\medskip
We note that the representation (\ref{btf}) is ``minimal," in the sense that
if $\omega I_n$ ($\omega$ is inner) is a common inner divisor of
$\Theta_0\Theta_2$ and $A$, or of $\Theta_2$ and $B$, then $\omega$ is
constant. \ On the other hand, (\ref{btf}) may be expressed
as in the form of the right coprime factorization as
\begin{equation}\label{btf2}
\Phi_+=\Delta_2\Delta_0A_r^*\quad\hbox{and}\quad \Phi_-=\Delta_2
B_r^*,
\end{equation}
where $\Delta_2\Delta_0$ and $A_r$ are right coprime and so are
$\Delta_2$ and $B_r$ (cf.
\cite[Lemma 3.2]{CHL2}).

\medskip

The {\it shift} operator $S$ on $H^2_{\mathbb C^n}$ is defined by $
S:=T_{z I_n}$. \ The following fundamental result known as the
Beurling-Lax Theorem is used in the sequel.
\bigskip

\noindent {\bf The Beurling-Lax Theorem.} \cite{Be}, \cite{Ha},
\cite{La} A nonzero subspace $M$ of $H^2_{\mathbb C^n}$ is invariant under
the shift operator $S$ on $H^2_{\mathbb C^n}$ if and only if
$M=\Theta H^2_{\mathbb C^m}$, where $\Theta$ is an inner matrix
function in $H^{\infty}_{M_{n\times m}}$ ($m\le n$). \ Furthermore,
$\Theta$ is unique up to a unitary constant right factor; that is,
if $M=\Delta H^2_{\mathbb{C}^r}$ \hbox{\rm (}for $\Delta$ an inner
function in $H^{\infty}_{M_{n\times r}}$\hbox{\rm )}, then $m=r$ and
$\Theta=\Delta W$, where $W$ is a \hbox{\rm (}constant in z\hbox{\rm
)} unitary matrix mapping $\mathbb C^m$ onto $\mathbb C^m$.

\bigskip

We are ready for:

\medskip

\begin{proof} [Proof of Theorem \ref{thm2001}]
Let $\Phi=\Phi_-^*+ \Phi_+\in L^\infty_{M_n}$ be such that $\Phi$
and $\Phi^*$ are of bounded type. \ Suppose that $T_\Phi$ is
hyponormal and $\text{\rm ker}\,[T_{\Phi}^*, T_{\Phi}]$ is invariant
under $T_{\Phi}$. \ In view of (\ref{btf}), we may write
$$
\Phi_+=A^*\Theta_0\Theta_2\quad\hbox{and}\quad \Phi_- = B^*\Theta_2,
$$
where $\Theta_i:=\theta_i I_n$  with an inner function $\theta_i$
($i=0,2$). \ In view of Lemma \ref{lem1000}, it suffices to prove that
if $B$ and $\Theta_2$ are left coprime then $A$ and $\Theta_2$ are
left coprime.

By recalling that left coprime-ness and the right coprime-ness
coincide for matrix-valued functions $A\in H^2_{M_n}$ and $\theta_2
I_n\equiv \Theta_2$ (where $\theta_2$ is an inner function) (cf.
\cite[Lemma 3.3]{CHL2}, \cite[Corollary C.14]{CHL5}), in view of
(\ref{btf}) and (\ref{btf2}), we can write
$$
\Phi_+ = A^*\Theta_0 \Theta_2=\Theta_2 \Delta_1 A_r^* \quad
\hbox{and} \quad \Phi_-=\Theta_2B^*,
$$
where $\Delta_1\in H^{\infty}_{M_n}$ is an inner function, $A_r\in
H^2_{M_n}$ and $\Theta_2\Delta_1$ are right coprime. \ First of all, a
careful analysis for STEP 1 of the proof for \cite[Theorem
3.5]{CHL2} shows that
\begin{equation}\label{3.16-10}
\Theta_0 H^2_{\mathbb C^n}\ \subseteq\ \hbox{\rm ker}\,[T_{\Phi}^*,
T_{\Phi}].
\end{equation}
(Note that we didn't employ the assumption ``$A$ and $\Theta_2$ are
left coprime" to get (\ref{3.16-10}) in STEP 1 of the proof for
\cite[Theorem 3.5]{CHL2}.) We first show that
\begin{equation}\label{3.16-8}
\hbox{$\Theta_0$ and $\Theta_2$ are coprime.}
\end{equation}
To see this we assume to the contrary that $\Theta_0$ and $\Theta_2$
are not coprime. \ Then there exists an inner function $\Omega$ of the
form $\Omega:=\omega I_n$ (where $\omega$ is a nonconstant inner
function) such that
\begin{equation}\label{3.16-6}
\Theta_0=\Omega \Theta_0^{\prime}\quad\hbox{and}\quad
\Theta_2=\Omega \Theta_2^{\prime}.
\end{equation}
Indeed, by the Beurling-Lax Theorem, there exists an inner function
$\Omega\in H^\infty_{M_n}$ such that $\Omega H^2_{\mathbb
C^n}=\Theta_0 H^2_{\mathbb C^n}\bigvee \Theta_2 H^2_{\mathbb C^n}$
and moreover,
$$
\Omega H^2_{\mathbb C^n} = \bigoplus_{j=1}^n
\bigl(\theta_0 H^2\bigvee \theta_2 H^2\bigr) =\bigoplus_{j=1}^n
\omega H^2,
$$
where $\omega$ is a common inner divisor of $\theta_0$
and $\theta_2$. \ Thus $\Omega=\omega I_n$, which shows
(\ref{3.16-6}). \ Then since
$\Theta_2\Theta_0^{\prime}=\Omega\Theta_0^{\prime}\Theta_2^{\prime}=\Theta_0\Theta_2^{\prime}$,
it follows from (\ref{3.16-10}) that
$$
\Theta_2\Theta_0^{\prime} H_{\mathbb C^n}^2\subseteq \Theta_0
H_{\mathbb C^n}^2\subseteq \hbox{\rm ker}\,[T_{\Phi}^*, T_{\Phi}].
$$
Observe that
\begin{equation}\label{3.9-1}
[T_{\Phi}^* , T_{\Phi}]=
H_{\Phi_+^*}^*H_{\Phi_+^*}-H_{\Phi_-^*}^*H_{\Phi_-^*}
=H_{A\Theta_2^*\Theta_0^*}^* H_{A\Theta_2^*\Theta_0^*} -
H_{B\Theta_2^*}^* H_{B\Theta_2^* },
\end{equation}
which implies
$$
H_{A\Theta_2^*\Theta_0^*} (\Theta_2\Theta_0^{\prime} H_{\mathbb
C^n}^2)=\{0\},\quad \hbox{so that}\ \ H_{A\Omega^*}H_{\mathbb
C^n}^2=\{0\}.
$$
Thus we must have that $G\equiv A\Omega^* \in H_{M_n}^2$. \ Then we
can write
$$
\Phi_+=\Theta_0\Theta_2 A^*=\Theta_0\Theta_2^\prime \Omega A^*
=\Theta_0\Theta_2^\prime G^*,
$$
which is a contradiction because the representation
$\Phi_+=\Theta_0\Theta_2 A^*$ is in ``minimal" form in view of
(\ref{btf}). \ This proves (\ref{3.16-8}).

We next show that $A$ and $\Theta_2$ are (right) coprime, To see
this, let $\Delta^{\prime}$ be a common right inner divisor of $A$
and $\Theta_2$. \ Then we can write
$$
A= A_1\Delta^{\prime}\quad\hbox{and}\quad \Theta_2=\Delta
\Delta^{\prime},
$$
where $\Delta \in H_{M_n}^{\infty}$ is inner and $A_1 \in
H_{M_n}^2$. \ We thus have
$$
\Phi_+ = \Theta_0 \Theta_2A^* =\Theta_0\Delta A_1^*=\Delta \Theta_0
A_1^*.
$$
But since
$$
\Phi_+ =\Theta_2 \Delta_1 A_r^* \quad(\hbox{where $\Theta_2\Delta_1$
and $A_r$ are right coprime}),
$$
it follows that $\Theta_2 \Delta_1$ is a left inner divisor of
$\Delta \Theta_0$. \ Thus $\Theta_2$ is a left inner divisor of
$\Delta \Theta_0$. \ We now claim
\begin{equation}\label{3.17}
\hbox{$\Theta_2$ is a left inner divisor of $\Delta$.}
\end{equation}
Indeed, since by (\ref{3.16-8}), $\Theta_0$ and $\Theta_2$ are
coprime, it follows from the Beurling-Lax Theorem that $\Theta_0
H_{\mathbb C^n}^2 \bigcap \Theta_2 H_{\mathbb C^n}^2=\Theta_0 \Theta_2
H_{\mathbb C^n}^2$. \ Therefore, if $\Theta_2$ is a left inner divisor of
$\Delta \Theta_0$, then $\Delta \Theta_0 H_{\mathbb C^n}^2 \subseteq
\Theta_2 H_{\mathbb C^n}^2$, and hence
$$
\Delta \Theta_0 H_{\mathbb C^n}^2=\Delta \Theta_0 H_{\mathbb C^n}^2
\bigcap \Theta_2 H_{\mathbb C^n}^2 \subseteq \Theta_0 H_{\mathbb C^n}^2
\bigcap \Theta_2 H_{\mathbb C^n}^2=\Theta_0 \Theta_2 H_{\mathbb C^n}^2,
$$
which implies $\Delta H_{\mathbb C^n}^2\subseteq \Theta_2 H_{\mathbb
C^n}^2$. \ This proves (\ref{3.17}). \ Therefore we must have that
$\Delta^{\prime}$ is a unitary constant, so that $A$ and $\Theta_2$
are (right) coprime. \ This completes the proof.
\end{proof}

\bigskip

%%%%%%%%%%%%%%%%%%%%%%%%%%%%%%%%%%%%%%%%%%%%%%%%%%%%%%%%%%%%%%%%%%%%%%%%%%%%%%%%
%
%
%                Section 2
%
%
%%%%%%%%%%%%%%%%%%%%%%%%%%%%%%%%%%%%%%%%%%%%%%%%%%%%%%%%%%%%%%%%%%%%%%%%%%%%%%%

\section{The proof of Theorem \ref{thm2002}}

\medskip

In this section we give a proof of Theorem \ref{thm2002}. \ In the
sequel, and for notational convenience we will denote the model
space generated by inner function $\Theta \in H^{\infty}_{M_n}$ by
$\mathcal H(\Theta)$, i.e.,
$$
\mathcal H(\Theta):=H^2_{\mathbb C^n}\ominus \Theta H^2_{\mathbb
C^n}.
$$
It is known that (cf. \cite[Corollary A.15]{CHL5})
$$
f\in \mathcal H(\Theta) \Longleftrightarrow f\in H^2_{\mathbb C^n} \
\hbox{and} \ \Theta^* f \in (H^2_{\mathbb C^n})^{\perp}.
$$

 To prove Theorem \ref{thm2002}, we need three
auxiliary lemmas.

\begin{lem}\label{lem3.1} Let $\Phi \in L^{\infty}_{M_n}$ be such that $T_{\Phi}$
is a hyponormal operator and let $K\in \mathcal E(\Phi)$. \ Then
$$
[T_{\Phi}^*,
T_{\Phi}]=H_{\Phi^*}^*(I-T_{\widetilde{K}}T_{\widetilde{K}^*})H_{\Phi^*},
$$
where $\widetilde{K}(z)=K^*(\overline{z})$.
\end{lem}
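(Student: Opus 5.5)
We start from the commutator formula obtained from the identity $T_{\Phi\Psi}-T_\Phi T_\Psi=H_{\Phi^*}^*H_\Psi$ in (\ref{basic}), exactly as in (\ref{3.9-1}):
$$
[T_\Phi^*,T_\Phi]=H_{\Phi^*}^*H_{\Phi^*}-H_{\Phi}^*H_{\Phi}.
$$
This follows by writing $T_\Phi^*T_\Phi = T_{\Phi^*\Phi}-H_{\Phi}^*H_{\Phi}$ and $T_\Phi T_\Phi^*=T_{\Phi\Phi^*}-H_{\Phi^*}^*H_{\Phi^*}$. We then use that $\Phi$ is normal (by the Gu--Hendricks--Rutherford hyponormality criterion), so $T_{\Phi^*\Phi}=T_{\Phi\Phi^*}$. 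Hence it suffices to show
$$
H_\Phi^*H_\Phi = H_{\Phi^*}^*T_{\widetilde K}T_{\widetilde K^*}H_{\Phi^*}.
$$

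Next we use $K\in\mathcal E(\Phi)$. Since $\Phi-K\Phi^*\in H^\infty_{M_n}$, Hankel operators with analytic symbols vanish, so $H_\Phi=H_{K\Phi^*}$. Apply the identity $H_{\Psi\Phi}=T_{\widetilde\Psi}^*H_\Phi$ from (\ref{basic}) with $\Psi=K\in H^\infty_{M_n}$ and symbol $\Phi^*$. This gives
$$
H_\Phi=H_{K\Phi^*}=T_{\widetilde K}^*H_{\Phi^*}=T_{\widetilde K^*}H_{\Phi^*}.
$$
Therefore $H_\Phi^*H_\Phi=H_{\Phi^*}^*T_{\widetilde K^*}^*T_{\widetilde K^*}H_{\Phi^*}=H_{\Phi^*}^*T_{\widetilde K}T_{\widetilde K^*}H_{\Phi^*}$. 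Substituting this into the commutator formula yields the claim.

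The only delicate points are bookkeeping. First, the identity $H_{\Psi\Phi}=T^*_{\widetilde\Psi}H_\Phi$ must be applied in the correct order, with $K$ on the left of $\Phi^*$ as in the definition of $\mathcal E(\Phi)$. Second, the normality of $\Phi$ is needed to cancel $T_{\Phi^*\Phi}$ against $T_{\Phi\Phi^*}$. I expect the first to be the main place to check carefully, verifying via $J_n M_\Psi=M_{\breve\Psi}J_n$ that $H_{K\Phi^*}f=J_nP_n^\perp(K\Phi^* f)$ indeed equals $P_n(\breve K J_nP_n^\perp(\Phi^*f))$. This holds because $K\cdot P_n(\Phi^* f)$ lies in $H^2_{\mathbb C^n}$ and so is killed by $P_n^\perp$. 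Also, $T_{\widetilde K^*}$ here is the Toeplitz operator with symbol $\breve K$, which lies in $\overline{H^\infty}$-type position.
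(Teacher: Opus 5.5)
Your proposal is correct and is precisely the computation the paper compresses into ``this readily follows from (\ref{basic})'': you expand $[T_\Phi^*,T_\Phi]$ via the semicommutator identity, use normality of $\Phi$ (from the Gu--Hendricks--Rutherford criterion) to cancel $T_{\Phi^*\Phi}-T_{\Phi\Phi^*}$, and then rewrite $H_\Phi=H_{K\Phi^*}=T_{\widetilde K}^*H_{\Phi^*}$. Your direct verification of that last identity via $J_nM_\Psi=M_{\breve\Psi}J_n$ is also correct.
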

\begin{proof} This readily follows from (\ref{basic}).
\end{proof}

\begin{lem}\label{lem3.2} Let $\Phi$ be a normal matrix-valued
function in $L^{\infty}_{M_n}$. \ If there is an inner function
$\Theta$ in $\mathcal E(\Phi)$, then the closure of $\hbox{Ran}\,
[T_{\Phi}^*, T_{\Phi}]$ equals the closure of
$T_{\Phi\Theta^*}\mathcal H(\Theta)$.
\end{lem}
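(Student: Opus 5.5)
By Lemma \ref{lem3.1} applied with $K=\Theta$, we have
$$
[T_\Phi^*,T_\Phi]=H_{\Phi^*}^*(I-T_{\widetilde\Theta}T_{\widetilde\Theta^*})H_{\Phi^*},
$$
and we note that $\Theta\in\mathcal E(\Phi)$ is admissible because $\|\Theta\|_\infty=1$. Since $\Theta$ is inner, $\widetilde\Theta$ is inner as well. Hence $T_{\widetilde\Theta}$ is an isometry and $I-T_{\widetilde\Theta}T_{\widetilde\Theta^*}$ is the orthogonal projection onto $\mathcal H(\widetilde\Theta)$. (We need $\widetilde\Theta$ inner in the two-sided sense. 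Membership $\Theta\in\mathcal E(\Phi)$ together with the normality of $\Phi$ should force $\Theta$ to be square, so $\widetilde\Theta^*\widetilde\Theta=I$; I will take $\Theta\in H^\infty_{M_n}$ inner, as the statement intends.) Writing $P$ for this projection, we get $[T_\Phi^*,T_\Phi]=(PH_{\Phi^*})^*(PH_{\Phi^*})$. Therefore
$$
\overline{\hbox{Ran}}\,[T_\Phi^*,T_\Phi]=\overline{\hbox{Ran}}\,(H_{\Phi^*}^*P)=\overline{H_{\Phi^*}^*\mathcal H(\widetilde\Theta)}.
$$
It remains to identify $H_{\Phi^*}^*\mathcal H(\widetilde\Theta)$ with $T_{\Phi\Theta^*}\mathcal H(\Theta)$.

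The next step is to use the structure $\Phi-\Theta\Phi^*\in H^\infty_{M_n}$. Multiplying on the left by $\Theta^*$ gives $\Theta^*\Phi-\Phi^*\in \Theta^*H^\infty$. Normality of $\Phi$ is needed to move $\Theta^*$ to the right: I want $\Phi^*=\Phi\Theta^*+F$ modulo an analytic error. To get this, I will take adjoints of $\Phi-\Theta\Phi^*=G\in H^\infty$, which give $\Phi^*=\Phi\Theta^*+G^*$ after multiplying by... Care is required here, and it is where normality of $\Phi$ (commuting $\Phi$ with $\Phi^*$, and hence with $\Theta$ on the relevant range) should be used. With such an identity in hand, $H_{\Phi^*}=H_{\Phi\Theta^*}+H_{G^*}$, and the term $H_{G^*}$ must be shown to be harmless on the model space. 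Alternatively, I can work directly through the identity $H_{\Phi^*}^*=H_{\widetilde{\Phi^*}}$ and transport via $J_n$: since $J_n$ intertwines $P_n$ with $P_n^\perp$ and sends $\breve{\ }$-symbols appropriately, $H_{\Phi^*}^*$ restricted to $\mathcal H(\widetilde\Theta)$ corresponds to $P_n(\Phi\,\cdot)$ applied to $J_n\mathcal H(\widetilde\Theta)$. The characterization $f\in\mathcal H(\Theta)\iff \Theta^*f\in (H^2)^\perp$ suggests that $J_n\mathcal H(\widetilde\Theta)=\Theta^*\mathcal H(\Theta)$. Concretely, for $f\in\mathcal H(\Theta)$, $\Theta^* f\in (H^2_{\mathbb C^n})^\perp$, and $J_n(\Theta^*f)$ lies in $H^2$ and is in $\mathcal H(\widetilde\Theta)$; this map should be a bijection between $\mathcal H(\Theta)$ and $\mathcal H(\widetilde\Theta)$. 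I will verify this bijection first.

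Given the bijection, $H_{\Phi^*}^*J_n(\Theta^*f)=P_n(\Phi\,\Theta^* f)=T_{\Phi\Theta^*}f$, using $H_{\Phi^*}^*g=P_nM_{\Phi}J_n g$ for $g\in H^2$ (which follows from $H_\Phi^*=H_{\widetilde\Phi}$ and $J_n^2=I$), together with the fact that $\Phi\Theta^*$ is bounded. This yields $H_{\Phi^*}^*\mathcal H(\widetilde\Theta)=T_{\Phi\Theta^*}\mathcal H(\Theta)$, and the claim about closures follows. The main obstacle is bookkeeping the $J_n$, $\breve{\ }$ and adjoint conventions so that the symbol comes out exactly as $\Phi\Theta^*$ rather than $\Theta^*\Phi$. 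If they differ, normality of $\Phi$ and $\Phi-\Theta\Phi^*\in H^\infty$ should be used to show that $T_{\Theta^*\Phi}$ and $T_{\Phi\Theta^*}$ have the same range closure on $\mathcal H(\Theta)$, modulo analytic terms that vanish under the relevant Hankel operator.
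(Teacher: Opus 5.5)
Your proposal is correct and follows essentially the paper's route. You apply Lemma~\ref{lem3.1} with $K=\Theta$, note that $I-T_{\widetilde\Theta}T_{\widetilde\Theta}^*$ is the orthogonal projection onto $\mathcal H(\widetilde\Theta)$, and write $[T_\Phi^*,T_\Phi]=(PH_{\Phi^*})^*(PH_{\Phi^*})$; using $\overline{\hbox{Ran}}\,X^*X=\overline{\hbox{Ran}}\,X^*$ there cleanly replaces the paper's Claims 1, 3 and 4. Your bijection $f\mapsto J_n(\Theta^*f)$ from $\mathcal H(\Theta)$ onto $\mathcal H(\widetilde\Theta)$, with inverse $g\mapsto\Theta J_n g$, is exactly the paper's Claim 2.

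The detour about moving $\Theta^*$ past $\Phi$, and the worry about $\Theta^*\Phi$ versus $\Phi\Theta^*$, are unnecessary, since your own computation gives $T_{\Phi\Theta^*}$ directly. Also, $\Theta$ is square with $\Theta\Theta^*=I$ simply because $\mathcal E(\Phi)\subseteq H^\infty_{M_n}$ by definition, not because $\Phi$ is normal.
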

\begin{proof} Let $\Theta$ be an inner matrix function in $\mathcal
E(\Phi)$. \ Put $G:=\Phi-\Theta \Phi^*$. \ Then $G \in H^{\infty}_{M_n}$
and we may write
$$
\Phi=\Theta \Phi^*+G.
$$
We will show that $\hbox{Ran}\,[T_{\Phi}^*, T_{\Phi}]$ is dense in
$T_{\Phi\Theta^*}\mathcal H(\Theta)$. \ First, we will prove that
\begin{equation}\label{10101}
\hbox{Ran}\,[T_{\Phi}^*, T_{\Phi}]
=H_{\Phi^*}^*(I-T_{\widetilde{\Theta}}T_{\widetilde{\Theta}}^*)
H_{\Phi^*}H^2_{\mathbb
C^n}
\subseteq T_{\Phi\Theta^*}\mathcal H(\Theta).
\end{equation}

To do so, we prove the following two claims.

\medskip

\noindent{\bf Claim 1.}
$
H_{\Phi^*}^*(I-T_{\widetilde{\Theta}}T_{\widetilde{\Theta}}^*)H^2_{\mathbb
C^n}=H_{\Phi^*}^*\mathcal H(\widetilde{\Theta}).
$

\medskip
\noindent For a proof of Claim 1, observe that
$$
I-T_{\widetilde{\Theta}}T_{\widetilde{\Theta}}^*=H_{\Theta^*}H_{{\Theta}^*}^*,
$$
and hence
$$
\ker(I-T_{\widetilde{\Theta}}T_{\widetilde{\Theta}}^*)=\ker
H_{\Theta^*}^*=\ker
H_{\widetilde{\Theta}^*}=\widetilde{\Theta}H^2_{\mathbb C^n}.
$$
Thus
$$
(I-T_{\widetilde{\Theta}}T_{\widetilde{\Theta}}^*)H^2_{\mathbb
C^n}=\mathcal H(\widetilde{\Theta}),
$$
and therefore
$$
H_{\Phi^*}^*(I-T_{\widetilde{\Theta}}T_{\widetilde{\Theta}}^*)H^2_{\mathbb
C^n}=H_{\Phi^*}^* \mathcal H(\widetilde{\Theta}).
$$

\medskip

\noindent{\bf Claim 2.}
$
H_{\Phi^*}^*\mathcal H(\widetilde{\Theta})=T_{\Phi \Theta^*}
\mathcal H(\Theta).
$

\medskip
\noindent For a proof of Claim 2, let $f\in H_{\Phi^*}^* \mathcal
H(\widetilde{\Theta})$, and hence $f=H_{\Phi^*}^* g$ for some $g\in
\mathcal H(\widetilde{\Theta})$. \ We observe that
$$
f=H_{\Phi^*}^*g=H_{\breve{\Phi}} g=J_n P_n^{\perp}
M_{\breve{\Phi}}g=P_n M_{\Phi}J_n g.
$$
Now take $g^{\prime}:=\Theta(J_n g)$. \ Since $g\in \mathcal
H(\widetilde{\Theta})$, we have $\breve{\Theta}g \in (H^2_{\mathbb
C^n})^{\perp}$. \ Thus
$$
g^{\prime}=\Theta(J_n g)=J_n(\breve{\Theta}g)\in H^2_{M_n} \quad
\hbox{and} \quad \Theta^* g^{\prime}=J_n g\in (H^2_{\mathbb
C^n})^{\perp},
$$
which implies that $g^{\prime} \in \mathcal H(\Theta)$. \ Observe that
$$
T_{\Phi \Theta^*}g^{\prime}=P_n(\Phi\Theta^* \Theta(J_n
g))=P_n(\Phi(J_ng))=P_nM_{\Phi}J_n g=f.
$$
Thus we have
$$
H_{\Phi^*}^* \mathcal H(\widetilde{\Theta}) \subseteq
T_{\Phi\Theta^*} \mathcal H(\Theta).
$$
Conversely, let $f\in T_{\Phi \Theta^*} \mathcal H(\Theta)$, and
hence $f=T_{\Phi \Theta^*}g$ for some $g\in \mathcal H(\Theta)$. \ Let
$g^{\prime}:=J_n(\Theta^* g)$. \ Then $g^{\prime} \in H^2_{\mathbb
C^n}$ and $\widetilde{\Theta}^*g^{\prime}=\breve{\Theta}J_n
(\Theta^*g)=J_n g \in (H^2_{\mathbb C^n})^{\perp}$. \ Thus $g^{\prime}
\in \mathcal H(\widetilde{\Theta})$ and
$$
\aligned
H_{\Phi^*}^* g^{\prime}&=H_{\Phi^*}^*J_n(\Theta^* g)=J_n
P_n^{\perp}M_{\breve{\Phi}}J_n(\Theta^* g)\\
&=P_n M_{\Phi}J_nJ_n(\Theta^* g)=P_n(\Phi\Theta^*
g)\\
&=T_{\Phi \Theta^*}g=f.
\endaligned
$$
Therefore,
$
T_{\Phi \Theta^*} \mathcal H(\Theta) \subseteq H_{\Phi^*}^* \mathcal
H(\widetilde{\Theta}),
$
which proves Claim 2.

\medskip

Now by combining Claim 1 and Claim 2 , we have
$$
\aligned \hbox{Ran}\,[T_{\Phi}^*,
T_{\Phi}]&=H_{\Phi^*}^*(I-T_{\widetilde{\Theta}}T_{\widetilde{\Theta}}^*)H_{\Phi^*}H^2_{\mathbb
C^n}\\
&\subseteq
H_{\Phi^*}^*(I-T_{\widetilde{\Theta}}T_{\widetilde{\Theta}}^*)H^2_{\mathbb
C^n}\\
&=T_{\Phi\Theta^*}\mathcal H(\Theta),
\endaligned
$$
which proves (\ref{10101}).

\medskip

We next prove that $\hbox{Ran}\,[T_{\Phi}^*, T_{\Phi}]$ is dense
in $T_{\Phi \Theta^*} \mathcal H(\Theta)$.
To do so , we first prove

\medskip

\noindent{\bf Claim 3.}
$
\bigl((I-T_{\widetilde{\Theta}}T_{\widetilde{\Theta}}^*)
H_{\Phi^*}\bigr)^*(I-T_{\widetilde{\Theta}}T_{\widetilde{\Theta}}^*)
H_{\Phi^*}=H_{\Phi^*}^*(I-T_{\widetilde{\Theta}}T_{\widetilde{\Theta}}^*)
H_{\Phi^*}.
$

\medskip
\noindent For a proof of Claim 3, observe that
$$
\aligned
(I-T_{\widetilde{\Theta}}T_{\widetilde{\Theta}}^*)(I-T_{\widetilde{\Theta}}T_{\widetilde{\Theta}}^*)&=I-T_{\widetilde{\Theta}}T_{\widetilde{\Theta}}^*
-T_{\widetilde{\Theta}}T_{\widetilde{\Theta}}^*+T_{\widetilde{\Theta}}T_{\widetilde{\Theta}}^*T_{\widetilde{\Theta}}T_{\widetilde{\Theta}}^*\\
&=I-2T_{\widetilde{\Theta}}T_{\widetilde{\Theta}}^*+T_{\widetilde{\Theta}}T_{\widetilde{\Theta}}^*\\
&=I-T_{\widetilde{\Theta}}T_{\widetilde{\Theta}}^*.
\endaligned
$$
Hence
$$
\bigl((I-T_{\widetilde{\Theta}}T_{\widetilde{\Theta}}^*)
H_{\Phi^*}\bigr)^*(I-T_{\widetilde{\Theta}}T_{\widetilde{\Theta}}^*)
H_{\Phi^*}=H_{\Phi^*}^*(I-T_{\widetilde{\Theta}}T_{\widetilde{\Theta}}^*)
H_{\Phi^*},
$$
which proves Claim 3.

Thus it follows from Claim 3 that
\begin{equation}\label{10102}
\ker [T_{\Phi}^*, T_{\Phi}]=\ker
H_{\Phi^*}^*(I-T_{\widetilde{\Theta}}T_{\widetilde{\Theta}}^*)H_{\Phi^*}=\ker
(I-T_{\widetilde{\Theta}}T_{\widetilde{\Theta}}^*)H_{\Phi^*}
\end{equation}

\medskip

We next prove

\medskip

\noindent{\bf Claim 4.}
$
H_{\Phi^*}^*(I-T_{\widetilde{\Theta}}T_{\widetilde{\Theta}}^*)H^2_{\mathbb
C^n} \subseteq (\ker[T_{\Phi}^*, T_{\Phi}])^{\perp} = \hbox{cl
Ran}\, [T_{\Phi}^*, T_{\Phi}].
$

\medskip
\noindent To establish Claim 4, let $f \in
H_{\Phi^*}^*(I-T_{\widetilde{\Theta}}T_{\widetilde{\Theta}}^*)H^2_{\mathbb
C^n} $ and $g \in \ker[T_{\Phi}^*, T_{\Phi}]$. \ Thus,
$f=H_{\Phi^*}^*(I-T_{\widetilde{\Theta}}T_{\widetilde{\Theta}}^*)h$ for some
$h\in H^2_{\mathbb C^n}$. \ Observe
$$
\aligned \langle f, g\rangle&=\langle
H_{\Phi^*}^*(I-T_{\widetilde{\Theta}}T_{\widetilde{\Theta}}^*)h, g
\rangle\\
&=\langle h,
(I-T_{\widetilde{\Theta}}T_{\widetilde{\Theta}}^*)H_{\Phi^*} g \rangle\\
&=0,
\endaligned
$$
because, by (\ref{10102}), $\ker [T_{\Phi}^*, T_{\Phi}] =\ker
(I-T_{\widetilde{\Theta}}T_{\widetilde{\Theta}}^*)H_{\Phi^*}$. \ Thus
$f \in (\ker [T_{\Phi}^*, T_{\Phi}])^{\perp}$. \ Now, recall that for
any bounded self-adjoint operator $A$ on a Hilbert space $\mathcal
H$, we have $\ker A=(\hbox{Ran}\, A)^{\perp}$ and $(\ker
A)^{\perp}=\hbox{cl Ran}\, A$. \ Therefore,
$$
H_{\Phi^*}^*(I-T_{\widetilde{\Theta}}T_{\widetilde{\Theta}}^*)H^2_{\mathbb
C^n} \subseteq (\ker [T_{\Phi}^*, T_{\Phi}])^{\perp} =\hbox{cl
Ran}\, [T_{\Phi}^*, T_{\Phi}],
$$
which proves Claim 4.

\medskip

By Claim 1 and Claim 2, we have
$H_{\Phi^*}^*(I-T_{\widetilde{\Theta}}T_{\widetilde{\Theta}}^*)H^2_{\mathbb
C^n}=T_{\Phi \Theta^*} \mathcal H(\Theta)$. \ Thus it follows from
(\ref{10101}) and Claim 4 that
$$
\hbox{Ran}\, [T_{\Phi}^*, T_{\Phi}]\subseteq T_{\Phi \Theta^*}
\mathcal
H(\Theta)=H_{\Phi^*}^*(I-T_{\widetilde{\Theta}}T_{\widetilde{\Theta}}^*)H^2_{\mathbb
C^n}\subseteq \hbox{cl Ran}\, [T_{\Phi}^*, T_{\Phi}].
$$
Therefore, the closure of $\hbox{Ran}\, [T_{\Phi}^*, T_{\Phi}]$ is
equal to the closure of $T_{\Phi \Theta^*} \mathcal H(\Theta)$. \ This
completes the proof.
\end{proof}

\bigskip

\begin{lem}\label{thm3.3} If $T_{\Phi}$ is hyponormal and there is a finite
Blaschke-Potapov product $Q$ in $\mathcal E(\Phi)$, then
$[T_{\Phi}^*, T_{\Phi}]$ is a finite rank operator.
\end{lem}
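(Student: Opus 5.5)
The plan is to use the factorization of Lemma~\ref{lem3.1} with $K=Q$. \ By Lemma~\ref{lem3.1},
$$
[T_\Phi^*,T_\Phi]=H_{\Phi^*}^*\bigl(I-T_{\widetilde Q}T_{\widetilde Q}^*\bigr)H_{\Phi^*}.
$$
So it suffices to show that $I-T_{\widetilde Q}T_{\widetilde Q}^*$ has finite rank. \ The rank of a product is at most the rank of any factor, so this gives the conclusion at once.

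To carry this out, note first that $Q$ is inner, and hence so is $\widetilde Q(z)=Q^*(\overline z)$. \ Indeed, $\widetilde Q$ is analytic: if $Q(z)=\sum Q_k z^k$, then $\widetilde Q(z)=\sum Q_k^* z^k$. \ Moreover $\widetilde Q^*\widetilde Q=\breve Q\breve Q^*=I_n$ a.e., because $Q$ is a square inner function and so $QQ^*=I_n$ as well. \ Consequently $T_{\widetilde Q}$ is an isometry. \ The identity
$$
I-T_{\widetilde Q}T_{\widetilde Q}^*=H_{Q^*}H_{Q^*}^*,
$$
already used in Claim 1 of Lemma~\ref{lem3.2}, shows that this operator is the orthogonal projection onto $\mathcal H(\widetilde Q)=H^2_{\mathbb C^n}\ominus \widetilde Q H^2_{\mathbb C^n}$. \ It therefore remains to show that $\dim\mathcal H(\widetilde Q)<\infty$.

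For this, observe that $\widetilde Q$ is again a finite Blaschke--Potapov product. \ Write $Q=v\prod_{m=1}^M(b_mP_m+(I-P_m))$. \ Then
$$
\widetilde Q(z)=\prod_{m=M}^{1}\bigl(\widetilde b_m(z)P_m+(I-P_m)\bigr)\,v^*,
$$
with the factors in reversed order and $\widetilde b_m(z)=\overline{b_m(\overline z)}$, which is the Blaschke factor with zero $\overline{\alpha_m}$. \ It is therefore enough to show that $\dim \mathcal H(\Delta)<\infty$ for every finite Blaschke--Potapov product $\Delta$. \ I would prove this by induction on the number of factors. \ The base case is $\dim\mathcal H(bP+(I-P))=\operatorname{rank}P\le n$, since $\mathcal H(bP+I-P)=P\mathbb C^n\otimes\mathcal H(b)$ and $\mathcal H(b)$ is one-dimensional. \ For the inductive step, take $\Delta=\Delta_1\Delta_2$. \ Then
$$
\mathcal H(\Delta_1\Delta_2)=\mathcal H(\Delta_1)\oplus\Delta_1\mathcal H(\Delta_2),
$$
and hence $\dim\mathcal H(\Delta)\le nM$. \ Alternatively, one can use the fact that $\dim\mathcal H(\Delta)$ equals the number of zeros of $\det\Delta$, which is a finite Blaschke product, counted with multiplicity.

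Combining these steps gives
$$
\operatorname{rank}[T_\Phi^*,T_\Phi]\le\dim\mathcal H(\widetilde Q)\le nM<\infty.
$$
I expect the only real care to be needed in the bookkeeping that $\widetilde Q$ is inner and a finite Blaschke--Potapov product, and in the orthogonal decomposition of the model space for a product of inner functions. \ Both are standard, and the latter follows from $\Delta_1\Delta_2H^2\subseteq\Delta_1H^2$ together with the fact that $\Delta_1$ acts isometrically.
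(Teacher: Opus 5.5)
Your proposal is correct and takes essentially the paper's route: apply Lemma~\ref{lem3.1} with $K=Q$, then bound $\hbox{rank}\,[T_\Phi^*,T_\Phi]$ by $\hbox{rank}\,(I-T_{\widetilde Q}T_{\widetilde Q}^*)=\hbox{rank}\,(H_{Q^*}H_{Q^*}^*)$. The paper writes this rank as $\dim\mathcal H(Q)$ and takes its finiteness for granted, while you write it as $\dim\mathcal H(\widetilde Q)$ and prove finiteness explicitly through the model-space decomposition, which is a sound added detail rather than a different argument.
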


\begin{proof} Observe that
$$
[T_{\Phi}^*,
T_{\Phi}]=H_{\Phi^*}^*(I-T_{\widetilde{Q}}T_{\widetilde{Q}}^*)H_{\Phi^*}.
$$
We thus have
$$
\hbox{rank}\,[T_{\Phi}^*, T_{\Phi}]\leq
\hbox{rank}\,(I-T_{\widetilde{Q}}T_{\widetilde{Q}}^*) =
\hbox{rank}\, H_{Q^*}=\dim \mathcal H(Q)<\infty.
$$
\end{proof}

We are ready to prove Theorem \ref{thm2002}.

\medskip

\begin{proof}[Proof of Theorem \ref{thm2002}]\
Suppose $\Phi=Q\Phi^*$, where $Q$ is a finite Blaschke-Potapov
product. \ Then by Lemma \ref{thm3.3}, $\hbox{Ran}\,[T_{\Phi}^*,
T_{\Phi}]$ is finite dimensional. \ By Lemma \ref{lem3.2}, we
have
\begin{equation}\label{3.1}
\hbox{Ran}\,[T_{\Phi}^*, T_{\Phi}]=T_{\Phi Q^*} \mathcal
H(Q)=T_{\Phi^*} \mathcal H(Q).
\end{equation}
If $Q$ is a unitary matrix then $\mathcal H(Q)=\{0\}$, and hence
$\hbox{Ran}\,[T_{\Phi}^*, T_{\Phi}]=\{0\}$, which says that
$T_{\Phi}$ is normal. \ Suppose instead $Q$ is not a unitary matrix,
so that $\dim \mathcal H(Q)\geq 1$. \ Since $[T_{\Phi}^*, T_{\Phi}]$
is a self-adjoint finite rank operator, we have $\hbox{Ran}\,[T_{\Phi}^*,
T_{\Phi}]=(\ker [T_{\Phi}^*, T_{\Phi}])^{\perp}$. \ Recall that if
$T_{\Phi}$ is subnormal then $\hbox{Ran}\,[T_{\Phi}^*, T_{\Phi}]$ is
an invariant subspace for $T_{\Phi^*}$. \ Thus it follows from
(\ref{3.1}) that
\begin{equation}\label{3.2}
T_{\Phi^*}(\hbox{Ran}\,[T_{\Phi}^*, T_{\Phi}]+\mathcal
H(Q))=\hbox{Ran}\,[T_{\Phi}^*, T_{\Phi}].
\end{equation}
Suppose $\Phi^*$ is not of bounded type. \ Then $\overline{\Phi}$ is
not of bounded type. \ We now claim that
\begin{equation}\label{3.3}
\hbox{Ran}\,[T_{\Phi}^*, T_{\Phi}] \bigcap \mathcal H(Q)=\{0\}.
\end{equation}
To see this, let $f\in \hbox{Ran}\,[T_{\Phi}^*, T_{\Phi}] \cap
\mathcal H(Q)$. \ By (\ref{3.1}), $f=T_{\Phi Q^*} g$ for some $g
\in \mathcal H(Q)$. \ Since $f \perp Q H^2_{\mathbb C^n}$, it follows
$Q^* f \perp H^2_{\mathbb C^n}$, and hence $T_{Q^*} f=0$. \ But since
$\Phi Q^*=Q^* \Phi$, we have that
$$
0=T_{Q^*} f=T_{Q^*}T_{\Phi Q^*} g=T_{Q^{*2}\Phi}g,
$$
which implies $Q^{*2}\Phi g=\Phi Q^{*2}g \perp H^2_{\mathbb C^n}$.
Therefore, $(g^*Q^2 \Phi^*)^t=\overline{\Phi}{Q^{2}}^t \overline{g}
\in H^2_{\mathbb C^n}$, so that ${Q^2}^t \overline{g}\in \ker
H_{\overline{\Phi}}$. \ Since by assumption, $H_{\overline{\Phi}}$ is
injective it follows ${Q^2}^t \overline{g}=0$. \ Since $Q Q^*=I$, it
follows $\overline{g}=0$, and hence $g=0$. \ Hence $f=T_{\Phi Q^*}
g=0$. \ This proves (\ref{3.3}). \ Using (\ref{3.3}), we have
$$
\dim\bigl(\hbox{Ran}\, [T_{\Phi}^*, T_{\Phi}]+\mathcal
H(Q)\bigr)=\dim \hbox{Ran}\, [T_{\Phi}^*, T_{\Phi}]+\dim \mathcal
H(Q).
$$
Then, by the Rank Theorem,
$$
\aligned \dim \ &\hbox{Ran}\, [T_{\Phi}^*, T_{\Phi}]+\dim\mathcal
H(Q)\\
&=\dim \ker T_{\Phi^*}|_{(\hbox{Ran}\, [T_{\Phi^*},
T_{\Phi}]+\mathcal H(Q))}+\dim \hbox{Ran}\,
T_{\Phi^*}|_{(\hbox{Ran}\, [T_{\Phi^*}, T_{\Phi}]+\mathcal H(Q))}\\
&=\dim \ker T_{\Phi^*}|_{(\hbox{Ran}\, [T_{\Phi^*},
T_{\Phi}]+\mathcal H(Q))}+\dim \hbox{Ran}\, [T_{\Phi^*}, T_{\Phi}]
\quad(\hbox{by} \ (\ref{3.2})),
\endaligned
$$
which gives
$$
\dim \mathcal H(Q)=\dim \ker T_{\Phi^*}|_{(\hbox{Ran}\, [T_{\Phi}^*,
T_{\Phi}]+\mathcal H(Q))}
$$
Thus we have
$$
\dim \ker T_{\Phi^*} \geq \dim \mathcal H(Q) \geq 1,
$$
which leads to a contradiction because $T_{\Phi^*}$ is injective by
assumption. \ This completes the proof.
\end{proof}

\bigskip

\begin{rem}\label{rem3.4} In general, if $H_{\overline{\Phi}}$ is not
injective then we don't guarantee that \linebreak $\hbox{Ran}\,[T_{\Phi}^*,
T_{\Phi}] \bigcap \mathcal H(Q)=\{0\}$. \ To see this, take a function $f
\in H^{\infty}$ such that $\overline{f}$ is not of bounded type and
put $\phi=f +\overline{f}$. \ Take
$$
\Phi=\begin{bmatrix}\phi&0\\0&z\end{bmatrix}.
$$
Then $\Phi^*$ is not
of bounded type and $\Phi=Q \Phi^*$ with
$Q=\begin{bmatrix}1&0\\0&z^2\end{bmatrix}$. \ Note that $Q$ is a
finite Blaschke-Potapov product. \ Then
$$
H_{\overline{\Phi}}\begin{bmatrix}0\\z
\end{bmatrix}=\begin{bmatrix}
H_{\phi}&0\\0&H_{\overline{z}}\end{bmatrix}\begin{bmatrix}0\\z\end{bmatrix}=\begin{bmatrix}0\\0\end{bmatrix},
$$
which says that $H_{\overline{\Phi}}$ is not injective. \ But since
$[T_{\Phi}^*,
T_{\Phi}]\begin{bmatrix}0\\1\end{bmatrix}=\begin{bmatrix}0\\1\end{bmatrix}$
and
$$
\left \langle \begin{bmatrix}0\\1\end{bmatrix}, \
Q\begin{bmatrix}f\\g\end{bmatrix} \right \rangle_{H^2_{\mathbb
C^2}}=\left \langle \begin{bmatrix}0\\1\end{bmatrix}, \
\begin{bmatrix}f\\z^2g\end{bmatrix} \right \rangle_{H^2_{\mathbb
C^2}}=0 \quad \hbox{for all} \ f, g \in H^2,
$$
it follows that $\begin{bmatrix}0\\1\end{bmatrix} \in \hbox{Ran}\,
[T_{\Phi}^*, T_{\Phi}]\cap \mathcal H(Q)$.
\end{rem}

\begin{rem}\label{rem3.5} In Theorem \ref{thm2002}, the injectivity of
$T_{\Phi^*}$ and $H_{\overline{\Phi}}$ are not necessary even though
$T_{\Phi}$ is self-adjoint. \ To see this, take a function $\phi$ as
in Remark \ref{rem3.4}. \ Since $\phi$ is real-valued, it follows
$T_{\phi}$ is self-adjoint. \ Take
$$
\Phi=\begin{bmatrix}\phi&0\\0&0\end{bmatrix}.
$$
Then $\Phi^*$ is not
of bounded type and $T_{\Phi}$ is self-adjoint. \ Observe $\Phi=Q
\Phi^*$ with $Q=\begin{bmatrix}1&0\\0&z\end{bmatrix}$. \ Note that $Q$
is a finite Blaschke-Potapov product. \ Observe that
$$
T_{\Phi^*}\begin{bmatrix}0\\1\end{bmatrix}=\begin{bmatrix}T_{\overline{\phi}}&0\\0&0\end{bmatrix}\begin{bmatrix}0\\1\end{bmatrix}=\begin{bmatrix}0\\0\end{bmatrix}
$$
and
$$
H_{\overline{\Phi}}\begin{bmatrix}0\\1\end{bmatrix}=\begin{bmatrix}H_{\overline{\phi}}&0\\0&0\end{bmatrix}\begin{bmatrix}0\\1\end{bmatrix}=\begin{bmatrix}0\\0\end{bmatrix},
$$
which show that both $T_{\Phi^*}$ and $H_{\overline{\Phi}}$ are not
injective.
\end{rem}

\vskip 1cm

\noindent \textit{Acknowledgments}. \ The authors are grateful to the Referee for a detailed reading of the paper and for helpful suggestions. \ The first-named author
(Abhinand M.) was supported by the Junior Research Fellowship,
University Grant commission, Government of India. \ The third-named
author (I.S. Hwang) was supported by NRF(Korea) grant No.
2022R1A2C1010830. \ The fourth-named author (W.Y. Lee) was supported
by NRF(Korea) grant No. 2021R1A2C1005428. \ The fifth-named author
(Prasad T.) was supported in part by the Mathematical Research
Impact Centric Support, MATRICS (MTR/2021/000373) by SERB,
Department of Science and Technology (DST), Government of India.

\bigskip

%%%%%%%%%%%%%%%%%%%%%%%%%%%%%%%%%%%%%%%%%%%%%%%%%%%%%%%%%%%%%%%%%%%%%%%%%%
%
%
%            References
%
%
%%%%%%%%%%%%%%%%%%%%%%%%%%%%%%%%%%%%%%%%%%%%%%%%%%%%%%%%%%%%%%%%%%%%%%%%%%

%%%%%%%%%%%%%%%%%%%%%%%%%%%%%%%%%%%%%%%%%%%%%%%%%%%%%%%%%%%%%%%%%%%%%%%%%%%%%%

\vskip 1cm

Mankunikuzhiyil Abhinand

Department of Mathematics, University of Calicut, Kerala-673635,
India

E-mail: abhinandmkrishnan@gmail.com

\bigskip

Ra{\'u}l\ E.\ Curto

Department of Mathematics, University of Iowa, Iowa City, IA 52242,
U.S.A.

E-mail: raul-curto@uiowa.edu

\bigskip

In Sung Hwang

Department of Mathematics, Sungkyunkwan University, Suwon 16419,
Korea

E-mail: ihwang@skku.edu

\bigskip

Woo Young Lee

School of Mathematics, Korea Institute for Advanced
Study (KIAS), Seoul 02455, Korea

E-mail: wylee@kias.re.kr

\bigskip

Thankarajan Prasad

Department of Mathematics, University of Calicut, Kerala-673635,
India

E-mail: prasadvalapil@gmail.com

\end{document}